\newcommand{\half}{\tfrac{1}{2}}
\newcommand{\reals}{\mathbb{R}}
\newcommand{\sml}[1]{{\small #1}}
\newcommand{\fromto}[3]{\sml{$#1 \le #2 \le #3$}}
\newcommand{\Ph}{\hat{P}}
\newcommand{\Ah}{\hat{A}}
\newcommand{\Bh}{\hat{B}}
\newcommand{\ah}{\hat{a}}
\newcommand{\bh}{\hat{b}}
\newcommand{\kron}{\otimes}
\newcommand{\norm}[1]{\|{#1}\|}
\DeclareMathOperator{\rank}{rank}
\DeclareMathOperator{\trace}{tr}
\newcommand{\nlsum}{\sum\nolimits}
\newtheorem{theorem}{Theorem}
\newtheorem{lemma}[theorem]{Lemma}
\newtheorem{prop}[theorem]{Proposition}
\newtheorem{corr}[theorem]{Corollary}
\newtheorem{conj}[theorem]{Conjecture}
\theoremstyle{definition}
\numberwithin{equation}{section}
\begin{document}

\title{Explicit eigenvalues of certain scaled trigonometric matrices}
\author{Suvrit Sra\\ \footnotesize \it Max Planck Institute for Intelligent Systems\\
  \footnotesize\it 72076 T\"ubingen, Germany}
\maketitle

\begin{abstract}
  In a very recent paper ``\emph{On eigenvalues and equivalent transformation of trigonometric matrices}'' (D. Zhang, Z. Lin, and Y. Liu, LAA 436, 71--78 (2012)), the authors motivated and discussed a trigonometric matrix that arises in the design of finite impulse response (FIR) digital filters. The eigenvalues of this matrix shed light on the FIR filter design, so obtaining them in closed form was investigated. Zhang \emph{et al.}\ proved that their matrix had rank-4 and they conjectured closed form expressions for its eigenvalues, leaving a rigorous proof as an open problem. This paper studies trigonometric matrices significantly more general than theirs, deduces their rank, and derives closed-forms for their eigenvalues. As a corollary, it yields a short proof of the conjectures in the aforementioned paper.
\end{abstract}

\section{Introduction}
Matrices generated by trigonometric functions enjoy a wide-range of appealing analytic and numerical properties, whereby they are important across a wide variety of problems. The present paper is motivated by the very recent article (in this journal) of~\citet{zhLiLi12}, who studied certain trigonometric matrices arising in the design finite impulse response (FIR) digital filters. In that paper, the authors motivated the importance of obtaining eigenvalues of these matrices for their FIR design task.

Specifically, \citet{zhLiLi12} considered \emph{trigonometric matrices} of the form
\begin{equation}
  \label{eq.71}
  P(\omega) :=
  \begin{bmatrix}
    A(\omega) & B(\omega)\\
    B(\omega)^T & A(\omega)
  \end{bmatrix},
\end{equation}
where $A(\omega)$ and $B(\omega)$ are $n \times n$ matrices ($n\ge 2$), and $\omega$ is the digital frequency variable with $0 \le \omega \le 2\pi$. The entries of $A(\omega)$ and $B(\omega)$ (denoted with lowercase letters) are given by
\begin{equation}
  \label{eq.18}
    a_{ij}(\omega) := \half(i+j-2)\cos(i\omega - j\omega),\qquad\text{and}\qquad b_{ij}(\omega) := \half(i+j-2)\sin(i\omega - j\omega). 
\end{equation}
For the matrix $P(\omega)$ defined using~\eqref{eq.18}, \citet{zhLiLi12} made the following conjecture.

\begin{conj}[\citep{zhLiLi12}]
  \label{conj.one}
  $P(\omega)$ has one positive eigenvalue $\lambda_+$ and one negative eigenvalue $\lambda_-$, both of which are of multiplicity 2, are independent of $\omega$, and are given by
  \begin{equation}
    \label{eq.72}
      \lambda_+ = \frac{n}{4}\left(n-1 + \sqrt{\frac{4n^2-6n+2}{3}}\right)\quad\quad\lambda_- = \frac{n}{4}\left(n-1 - \sqrt{\frac{4n^2-6n+2}{3}}\right).
  \end{equation}
  The other eigenvalues of $P(\omega)$ are zeros.
\end{conj}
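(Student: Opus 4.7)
The plan is to fold the two blocks of $P(\omega)$ into a single complex Hermitian matrix and diagonalize it via an $\omega$-dependent unitary. Set $Z(\omega) := A(\omega) + \sqrt{-1}\,B(\omega)$; entrywise $z_{jk}(\omega) = \half(j+k-2)\,e^{\sqrt{-1}(j-k)\omega}$. Introducing the $\omega$-independent real matrix $M$ defined by $M_{jk} := \half(j+k-2)$ and the unitary $D := \mathrm{diag}(e^{\sqrt{-1}\omega},\ldots,e^{n\sqrt{-1}\omega})$, one obtains the factorization $Z(\omega) = D\,M\,D^{*}$. Hence $Z(\omega)$ is Hermitian (equivalently, $A$ is symmetric and $B$ is skew-symmetric, both visible from~\eqref{eq.18}) and unitarily similar to $M$, so its spectrum is real and independent of $\omega$.

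The next step is to pass from $P(\omega)$ to $Z(\omega)$. Using $B^{T} = -B$, one rewrites $P = \bigl[\begin{smallmatrix}A & B\\ -B & A\end{smallmatrix}\bigr]$, the standard $2n\times 2n$ real form of the complex $n\times n$ matrix $\overline{Z} = A - \sqrt{-1}\,B$. A direct check shows that any complex eigenvector $u = x + \sqrt{-1}\,y$ of $Z$ with real eigenvalue $\lambda$ produces two real eigenvectors of $P$ with the same eigenvalue $\lambda$: the $2n$-vector formed by stacking $x$ above $-y$, and the one formed by stacking $y$ above $x$. These are $\reals$-linearly independent whenever $u\neq 0$ (they are the real forms of $u$ and $\sqrt{-1}\,u$), and orthogonality of the eigenspaces of the Hermitian $Z$ at distinct eigenvalues yields an exact doubling: each eigenvalue of $Z$ of complex multiplicity $m$ is an eigenvalue of $P$ of real multiplicity $2m$.

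Finally, compute the spectrum of $M$, which equals that of $Z$. Letting $v := (0,1,\ldots,n-1)^{T}$ and $e := (1,\ldots,1)^{T}$, we have $M = \half\bigl(v e^{T} + e v^{T}\bigr)$. For $n\ge 2$ the vectors $v$ and $e$ are linearly independent, so $\rank M = 2$; the two nonzero eigenvalues of $M$ are those of its $2\times 2$ representation on the invariant subspace $\mathrm{span}(v,e)$, whose entries depend only on the elementary sums $e^{T}e = n$, $e^{T}v = \tfrac{n(n-1)}{2}$, and $v^{T}v = \tfrac{n(n-1)(2n-1)}{6}$. Solving the resulting $2\times 2$ characteristic equation and simplifying produces exactly the $\lambda_{\pm}$ of~\eqref{eq.72}. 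Combined with the doubling, this forces $\rank P(\omega) = 4$, gives multiplicity $2$ for each of $\lambda_{\pm}$, and makes the remaining $2n-4$ eigenvalues of $P$ equal to zero.

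The main potential pitfall is the eigenvalue-doubling step: one must carefully check $\reals$-linear independence of the two real eigenvectors built from every $u\neq 0$ and confirm that these doublings account for all of $\reals^{2n}$, so that no eigenvalues of $P$ are missed. Once that bookkeeping is pinned down, everything else reduces to an elementary $2\times 2$ eigenvalue problem for $M$ — essentially just a trace and a determinant.
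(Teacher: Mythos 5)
Your proposal is correct, and it takes a genuinely different route from the paper. You complexify: $Z(\omega)=A+\sqrt{-1}\,B$ factors as $DMD^{*}$ with $D$ diagonal unitary and $M_{jk}=\half(j+k-2)$, so the $\omega$-independence of the spectrum is explained in one line by unitary similarity, and the whole problem collapses to the rank-2 real symmetric matrix $M=\half(ve^{T}+ev^{T})$; the real-form doubling (each eigenvalue of the Hermitian $Z$ of multiplicity $m$ giving multiplicity $2m$ for $P$) is standard and your eigenvector check $P(x;-y)=\lambda(x;-y)$, $P(y;x)=\lambda(y;x)$ is right, with completeness following from an orthonormal eigenbasis of $Z$. (Tiny quibble: under the identification you chose, those two real vectors are the real forms of $\bar{u}$ and $\sqrt{-1}\,\bar{u}$ rather than of $u$ and $\sqrt{-1}\,u$; this changes nothing.) The paper instead works entirely over the reals for a more general family: it bounds $\rank(P)\le 4$ via the Hadamard-product rank inequality applied to $P=\Ph\odot\bigl[\begin{smallmatrix}L&L\\L&L\end{smallmatrix}\bigr]$ together with a block-rank lemma showing $\rank(\Ph)=2$, then computes $\trace(P^{m})=2\trace(L^{m})$ for $m\le 4$ by direct trigonometric summation and factors the reduced quartic via Newton's identities. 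Your approach is shorter and more conceptual for this conjecture, and in fact it generalizes with no extra effort: for the paper's generalized matrices one has $A+\sqrt{-1}\,B=DLD^{*}$ with $D=\mathrm{diag}(e^{\sqrt{-1}x_{1}},\ldots,e^{\sqrt{-1}x_{n}})$, so the same doubling argument recovers Proposition~\ref{prop.roots} and even upgrades the paper's inequality~\eqref{eq.31} to the equality $\rank(P)=2\rank(L)$ for free. What the paper's trace computation buys is a purely real, identity-based derivation that avoids any appeal to complexification. The only detail you should still write out is the final $2\times 2$ arithmetic (trace and determinant of $M$ restricted to $\mathrm{span}(v,e)$) and the observation that $\lambda_{-}<0$ by strict Cauchy--Schwarz since $v$ and $e$ are independent for $n\ge 2$; both are routine and do confirm~\eqref{eq.72}.
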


In this paper, we consider trigonometric matrices more general than~\eqref{eq.18}, and derive their eigenvalues in closed form. As a corollary (Corollary~\ref{corr.conj}) we obtain a short proof of Conjecture~\ref{conj.one}. Our more general matrices may be useful in applications such as those mentioned by~\citet{zhLiLi12} and references therein.

\section{Generalized trigonometric matrices}
In this paper we consider \emph{generalized trigonometric matrices}
\begin{equation}
  \label{eq.26}
  P(\omega) :=
  \begin{bmatrix}
    A(\omega) & B(\omega)\\
    B^T(\omega) & A(\omega)
  \end{bmatrix},
\end{equation}
that have the same form as~(\ref{eq.71}) but with more general component matrices $A(\omega)$ and $B(\omega)$ defined by
\begin{equation}
  \label{eq.24}
    a_{ij}(\omega) := l_{ij}\cos(x_i - x_j),\qquad\text{and}\qquad b_{ij}(\omega) := l_{ij}\sin(x_i - x_j),
\end{equation}
where $x_i$ and $x_j$ are components of an arbitrary vector $x \in \reals^n$, and $l_{ij}$ are the entries of an arbitrary, symmetric rank-2 matrix $L$ defined as
\begin{equation}
  \label{eq.25}
  L := lh^T + hl^T,\quad\text{for vectors}\ l, h \in \reals^n.
\end{equation}
In particular, matrix~\eqref{eq.71}  corresponds to $L=le^T+el^T$ for vector $l$ having entries $l_i=(i-1)/2$, where $e$ denotes the all ones vector.

To analyze the rank of $P(\omega)$ and its eigenvalues we need to introduce more notation. First, we drop $\omega$, because the end results do not depend on it; then, we introduce the \emph{pure} trigonometric matrix
\begin{equation}
  \label{eq.27}
  \Ph :=
  \begin{bmatrix}
    \Ah & \Bh\\
    \Bh^T & \Ah
  \end{bmatrix},
\end{equation}
where the entries of the component matrices $\Ah$ and $\Bh$ are given by
\begin{equation}
  \label{eq.29}
    \ah_{ij} := \cos(x_i - x_j),\qquad\text{and}\qquad \bh_{ij} := \sin(x_i - x_j),
\end{equation}
Using this notation we see that the matrix~(\ref{eq.26}) can be written as
\begin{equation}
  \label{eq.30}
  P = \Ph \odot \left(
    \begin{bmatrix}
      1 & 1\\
      1 & 1
    \end{bmatrix} \kron L
  \right) = \Ph \odot
  \begin{bmatrix}
    L & L\\
    L & L
  \end{bmatrix},
\end{equation}
where $\kron$ is the Kronecker and $\odot$ the Hadamard product. But we know that $\rank(X \odot Y) \le \rank(X)\rank(Y)$ for any $X$ and $Y$; moreover $\rank(
\tiny\begin{bmatrix}
  L & L\\ L & L
\end{bmatrix}
) =\rank(L)= 2$, whereby from~\eqref{eq.30} it follows that
\begin{equation}
  \label{eq.31}
  \rank(P) \le 2\rank(\Ph).
\end{equation}
We note that inequality~\eqref{eq.31} may be tightened to the equality $\rank(P) = \rank(L)\rank(\Ph)$ (a fact that follows easily once we have derived the eigenvalues of $P$).  But for now, the inequality suffices and we proceed to analyze the rank of $\Ph$. To that end, Lemma~\ref{lem.rank} proves useful.

\begin{lemma}
  \label{lem.rank}
  Let $A=UU^T$, where $U$ is any $n \times 2r$ ($2r \le n$) matrix of rank $2r$. Let $J_{2r}$ be the `symplectic identity'
  \begin{equation*}
    J_{2r} :=
    \begin{bmatrix}
      0 & I_r\\
      -I_r & 0
    \end{bmatrix},
  \end{equation*}
  where $I_r$ is the $r \times r$ identity matrix. Let $B=UJ_{2r}U^T$. Then, the rank of the block symmetric matrix
  \begin{equation}
    \label{eq.33}
    Z =  \begin{bmatrix}
      A & B\\
      B^T & A
    \end{bmatrix},
  \end{equation}
  equals the rank of $A$, i.e., $\rank(Z) = \rank(A)$.
\end{lemma}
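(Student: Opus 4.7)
The plan is to exhibit $Z$ as a rank-$2r$ outer product and then match that with the obvious lower bound coming from the $A$-block. Two algebraic facts make the factorization work: first, $J_{2r}^T = -J_{2r}$ (so $B^T = U J_{2r}^T U^T = -U J_{2r} U^T = -B$, i.e.\ $B$ is skew-symmetric and the off-diagonal blocks of $Z$ are $B$ and $-B$); second, the symplectic identity satisfies $J_{2r}^2 = -I_{2r}$. These two identities are all one really needs.

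Given those, the factorization I would write down is
\begin{equation*}
  Z \;=\; \begin{bmatrix} U \\ -U J_{2r} \end{bmatrix}\begin{bmatrix} U^T & J_{2r} U^T \end{bmatrix},
\end{equation*}
whose correctness I would verify by direct block multiplication: the $(1,1)$-block is $UU^T=A$, the $(1,2)$-block is $UJ_{2r}U^T=B$, the $(2,1)$-block is $-UJ_{2r}U^T=-B=B^T$, and the $(2,2)$-block is $-UJ_{2r}^2 U^T = UU^T = A$ using $J_{2r}^2=-I_{2r}$. This is essentially the observation that the $4r\times 4r$ block matrix $\bigl[\begin{smallmatrix} I & J_{2r}\\ -J_{2r} & I\end{smallmatrix}\bigr]$ itself has rank $2r$, since $\bigl[\begin{smallmatrix} I & J_{2r}\\ -J_{2r} & I\end{smallmatrix}\bigr]=\bigl[\begin{smallmatrix} I\\ -J_{2r}\end{smallmatrix}\bigr]\bigl[\begin{smallmatrix} I & J_{2r}\end{smallmatrix}\bigr]$, and then sandwiching with $\mathrm{diag}(U,U)$ gives $Z$.

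From the factorization I immediately read off $\rank(Z)\le 2r$, since the left factor is $2n\times 2r$. For the matching lower bound, I would use that $A = UU^T$ appears as a principal submatrix of $Z$, so $\rank(Z) \ge \rank(A) = 2r$ (because $U$ has rank $2r$ by hypothesis). Combining the two inequalities yields $\rank(Z) = 2r = \rank(A)$, which is the claim.

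I don't expect any real obstacle: the main step is to guess the right outer-product form of $Z$, and this is forced once one notices the two structural identities $J_{2r}^T=-J_{2r}$ and $J_{2r}^2 = -I_{2r}$. Everything else is a direct block-matrix calculation together with the elementary submatrix bound on rank.
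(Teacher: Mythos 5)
Your proof is correct. It hinges on the same two structural identities as the paper's argument ($J_{2r}^T=-J_{2r}$ and $J_{2r}^2=-I_{2r}$) and on factoring $Z$ through $U$, but the execution is genuinely different. The paper writes $Z=(U\oplus U)\,L_{2r}\,(U\oplus U)^T$ with $L_{2r}=\bigl[\begin{smallmatrix} I_{2r} & J_{2r}\\ -J_{2r} & I_{2r}\end{smallmatrix}\bigr]$, invokes two rank-preservation facts (multiplication by a full-column-rank matrix on the left and a full-row-rank matrix on the right preserves rank) to get $\rank(Z)=\rank(L_{2r})$ in one shot, and then evaluates $\rank(L_{2r})=2r$ by a Schur-complement manipulation. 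You push the factorization one step further, writing $Z$ as a single $(2n\times 2r)\cdot(2r\times 2n)$ product --- equivalently, factoring the middle matrix as $L_{2r}=\bigl[\begin{smallmatrix} I\\ -J_{2r}\end{smallmatrix}\bigr]\bigl[\begin{smallmatrix} I & J_{2r}\end{smallmatrix}\bigr]$, which you note explicitly --- so the upper bound $\rank(Z)\le 2r$ falls out with no auxiliary lemmas; you then close the gap with the elementary observation that $A=UU^T$ sits inside $Z$ as a principal submatrix of rank $2r$. The trade-off is that your route needs a separate lower-bound step where the paper gets equality of ranks directly, but in exchange it dispenses with both the rank-preservation facts and the Schur complement, and is shorter and more self-contained. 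All of your block computations check out ($B^T=-B$ from skew-symmetry of $J_{2r}$, and the $(2,2)$ block $-UJ_{2r}^2U^T=UU^T=A$), so there is nothing to fix.
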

\begin{proof}
  First, recall two basic facts (see e.g.,~\cite{horn85}) about ranks of matrix products:
  \begin{enumerate}[(i)]
  \item Let $X$ be $m \times n$ and $Y$ any $n \times p$ matrix with rank $n$. Then,  $\rank(XY)=\rank(X)$; and
  \item Let $X$ be $m \times n$ and $Y$ any $p \times m$ matrix with rank $m$. Then,  $\rank(YX)=\rank(X)$.
  \end{enumerate}
  It is easy to see that $Z$ may be factorized as
  \begin{equation*}
    Z =
    \begin{bmatrix}
      UU^T & UJ_rU^T\\
      UJ_r^TU^T & UU^T
    \end{bmatrix}\quad=\quad
    \begin{bmatrix}
      U & 0\\
      0 & U
    \end{bmatrix}
    L_{2r}
    \begin{bmatrix}
      U^T & 0\\
      0 & U^T
    \end{bmatrix},\quad\text{where}\ L_{2r} := 
    \begin{bmatrix}
      I_{2r} & J_{2r}\\
      -J_{2r} & I_{2r}
    \end{bmatrix}.
  \end{equation*}
  Since $U$ has rank $2r$, the rank of the direct sum $\rank(U\oplus I_2) = 4r$; similarly $\rank(U^T\oplus I_2) = 4r$. Thus, using Properties~(i) and~(ii) of ranks, we conclude that $\rank(Z)=\rank(L_{2r})$. But Since $I_{2r}$ and $J_{2r}$ are invertible, elementary manipulations show that $\rank(L_{2r}) = \rank(I_{2r}) + \rank(I_{2r} - J_{2r}I_{2r}(-J_{2r})) = \rank(I_{2r}) = 2r$, which follows upon noting $J_{2r}J_{2r}=-I_{2r}$.\qedhere
\end{proof}
\noindent As a consequence of~\ref{lem.rank} we immediately have the following corollary.
\begin{corr}
  \label{corr.rank}
  Let $\Ph$ be as defined by~(\ref{eq.27}). Then, $\rank(\Ph) = 2$.
\end{corr}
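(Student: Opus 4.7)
The plan is to verify the hypotheses of Lemma~\ref{lem.rank} with $r=1$, then read off the conclusion directly. That is, I must exhibit an $n\times 2$ matrix $U$ of rank $2$ such that $\hat{A}=UU^T$ and $\hat{B}=UJ_2 U^T$.

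The correct choice of $U$ is forced by the standard trigonometric addition formulas. Let $c,s\in\reals^n$ denote the vectors with entries $c_i=\cos x_i$ and $s_i=\sin x_i$, and set $U:=[\,s\ \ c\,]$. The identity $\cos(x_i-x_j)=\cos x_i\cos x_j+\sin x_i\sin x_j$ immediately gives $\hat{A}_{ij}=s_is_j+c_ic_j$, equivalently $\hat{A}=ss^T+cc^T=UU^T$. For $\hat{B}$, expanding $\sin(x_i-x_j)=\sin x_i\cos x_j-\cos x_i\sin x_j$ yields $\hat{B}=sc^T-cs^T$; and the direct computation $UJ_2=[-c\ \ s]$ followed by $UJ_2 U^T=sc^T-cs^T$ recovers $\hat{B}$ on the nose.

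Provided the two columns of $U$ are linearly independent in $\reals^n$ (which holds whenever the $x_i$ are not all congruent modulo $\pi$), $U$ has rank $2$, and Lemma~\ref{lem.rank} applies to give $\rank(\hat{P})=\rank(\hat{A})=2$. I do not anticipate any genuine obstacle: the argument is driven entirely by the sum-of-angles formulas, and the only subtle point is the rank-of-$U$ caveat, which excludes degenerate choices of $x$ that do not arise in the FIR-filter applications motivating these matrices.
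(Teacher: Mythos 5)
Your proof is correct and follows exactly the paper's own route: factor $\Ah = UU^T$ and $\Bh = UJ_2U^T$ with $U = [\,s\ \ c\,]$ via the angle-subtraction formulas, then invoke Lemma~\ref{lem.rank} with $r=1$. You are in fact slightly more careful than the paper, which simply asserts $\rank(U)=2$ without flagging the degenerate case where all the $x_i$ coincide modulo $\pi$ (there $U$ drops to rank $1$, the lemma does not apply, and one must argue separately that $\Bh=0$ and $\Ph = I_2\kron\Ah$ still has rank $2$).
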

\begin{proof}
  Notice that $\ah_{ij}=\cos(x_i-x_j) = \cos(x_i)\cos(x_j)+\sin(x_i)\sin(x_j)$, while $\bh_{ij}=\sin(x_i-x_j)=\sin(x_i)\cos(x_j)-\cos(x_i)\sin(x_j)$. Thus, $\Ah$ is of the form $UU^T$, where $\rank(U)=2$, while $\Bh$ is of the form $UJ_2U^T$, for a suitable $n \times 2$ real matrix $U$. Thus, $\rank(\Ph) = \rank(\Ah) = 2$.
\end{proof}

\section{Explicit eigenvalues}
\label{sec.eigvals}
In this section we derive explicit expressions for eigenvalues of $P$. Here Corollary~\ref{corr.rank} plays a key role since it allows us to conclude that $\rank(P) \le 4$ (using~(\ref{eq.31})). Thereafter, basic linear algebra shows that the nonzero eigenvalues of $P$ lie among the roots of its reduced order-4 characteristic polynomial:
\begin{equation}
  \label{eq.7}
  \chi_P(\lambda) := \lambda^4 - \phi_1\lambda^3 + \phi_2\lambda^2 - \phi_3\lambda + \phi_4,
\end{equation}
where $\phi_m$ is the $m$th elementary symmetric polynomial of $P$. Using~\eqref{eq.7} we can now proceed onto computing the eigenvalues of $P$. We will see that when $\rank(L)=2$, the characteristic polynomial~\eqref{eq.7} actually has $4$ nonzero roots, thus actually the equality $\rank(P)=4$ holds.

All that remains is to factorize the polynomial~\eqref{eq.7}, which will then yield the desired eigenvalues. To that end, we first compute the coefficients $\phi_m$, a job that is simplified by invoking the well-known Newton's identities for symmetric functions~\citep[see e.g.,][]{mead92}, and the basic fact that $\sum_i \lambda_i^m(P) = \trace P^m$. Specifically, we have the following identities:
\begin{equation}
  \label{eq.8}
  \begin{split}
    \phi_1 &= \trace P\\
    2\phi_2 &= (\trace P)^2- \trace P^2\\
    3\phi_3 &= \phi_2\trace P - \trace P\trace P^2 + \trace P^3\\
    4\phi_4 &= \phi_3\trace P - \phi_2\trace P^2 + \trace P\trace P^3 - \trace P^4.
  \end{split}
\end{equation}
To compute $\phi_m$ ($1\le m\le 4$) via~\eqref{eq.8} we require knowledge of $\trace(P^m)$. And it is at this point where we may hope for benign simplification due to the special trigonometric structure of $P$. Here are the details.

\begin{lemma}[Powers]
  \label{lem.powers}
  Let $P$ and $L$ be as defined by equations~(\ref{eq.26}) and~\eqref{eq.25}, respectively. Then,
  \begin{equation}
    \label{eq.3}
    \trace( P^m) = 2\trace(L^m),\quad m=1, 2, 3, 4.
  \end{equation}
\end{lemma}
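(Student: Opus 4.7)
The plan is to exploit a hidden complex structure in $P$. Since $L$ is symmetric while $\sin(x_i - x_j)$ is antisymmetric in $(i,j)$, the off-diagonal block $B$ is skew-symmetric, so $B^T = -B$ and $P = \begin{bmatrix} A & B \\ -B & A \end{bmatrix}$. This matrix is the image of the complex matrix $A - iB$ under the standard real-block embedding $\phi(X + iY) := \begin{bmatrix} X & -Y \\ Y & X \end{bmatrix}$, which is a multiplicative $\reals$-algebra homomorphism satisfying $\trace(\phi(M)) = 2\,\mathrm{Re}\,\trace(M)$. Combining the multiplicativity $\phi(M)^m = \phi(M^m)$ with this trace formula gives $\trace(P^m) = 2\,\mathrm{Re}\,\trace((A - iB)^m)$. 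Since $A$ is real symmetric and $B$ real skew, $A - iB$ is Hermitian; its powers have real trace, which equals $\trace((A + iB)^m)$ by complex conjugation. Writing $H := A + iB$, the lemma reduces to showing $\trace(H^m) = \trace(L^m)$.

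To compute $H$, I would use $\cos(x_i - x_j) + i\sin(x_i - x_j) = e^{i(x_i - x_j)}$: with $w_k := e^{ix_k}$, the entries are $H_{ij} = l_{ij}\,w_i\bar{w}_j$. Substituting $l_{ij} = l_ih_j + h_il_j$ and introducing $p := D_l w$, $q := D_h w$ (where $D_l, D_h$ are the diagonal matrices of $l$ and $h$), the matrix $H$ factorizes neatly as $H = pq^* + qp^* = WKW^*$, with $W := [p,q] \in \mathbb{C}^{n\times 2}$ and $K := \begin{bmatrix} 0 & 1 \\ 1 & 0 \end{bmatrix}$. In parallel, $L$ itself factors as $L = [l,h]\,K\,[l,h]^T$.

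The crux is the observation that $W^*W$ is a \emph{real} $2 \times 2$ matrix: since $|w_k| = 1$, its diagonal entries are $\|l\|^2$ and $\|h\|^2$, while each off-diagonal entry is $\sum_k l_k h_k = l^Th$; hence $W^*W = [l,h]^T[l,h]$. Applying the cyclic trace identity $\trace((WKW^*)^m) = \trace((KW^*W)^m)$ to both $H$ and $L$ then delivers $\trace(H^m) = \trace((KW^*W)^m) = \trace((K[l,h]^T[l,h])^m) = \trace(L^m)$. Combined with the first paragraph, this yields $\trace(P^m) = 2\,\trace(L^m)$ for every integer $m \ge 1$, and in particular for the four cases $m = 1, 2, 3, 4$ claimed by the lemma.

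I anticipate no real obstacle. The two facts requiring verification are routine: that $\phi$ is multiplicative (a short block calculation) and that the off-diagonal entries of $W^*W$ are real (immediate from $\bar{w}_k w_k = 1$). Everything else is standard cyclic trace manipulation; notably, the resulting identity is clean enough to hold for all $m$, not merely the four values needed here.
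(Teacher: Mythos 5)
Your proof is correct, but it takes a genuinely different and more structural route than the paper. The paper proves the lemma by brute force: it splits $\trace(P^m)$ into the four block sums $S_1,\dots,S_4$, and for each of $m=2,3,4$ separately collapses the resulting sums with the angle-addition formulas $\cos\theta_{ik}\cos\theta_{kj}-\sin\theta_{ik}\sin\theta_{kj}=\cos(\theta_{ik}+\theta_{kj})$, etc. You instead observe that $B$ is skew-symmetric, recognize $P$ as the realification $\phi(A-iB)$ of a Hermitian matrix, and reduce everything to $\trace(H^m)=\trace(L^m)$ for $H=A+iB$ with $H_{ij}=l_{ij}e^{i(x_i-x_j)}$, which you settle by factoring $H=WKW^*$ and using cyclicity together with the key fact that $W^*W=[l,h]^T[l,h]$ is real. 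All of these steps check out. What your approach buys: it proves the identity for \emph{every} $m\ge 1$ in one stroke, which (combined with the rank bound $\rank(P)\le 4$) immediately implies that the nonzero spectrum of $P$ is that of $L$ with each multiplicity doubled -- so it would also streamline Proposition~\ref{prop.roots}. One remark: your final factorization is more elaborate than necessary. Since $|e^{ix_k}|=1$, the diagonal matrix $D_w=\mathrm{diag}(e^{ix_1},\dots,e^{ix_n})$ is unitary and $H=D_wLD_w^{-1}$, so $H$ is similar to $L$ and $\trace(H^m)=\trace(L^m)$ follows at once, for any symmetric $L$ whatsoever; the rank-2 structure of $L$ is not needed for this lemma (nor is it used in the paper's proof). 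This does not affect correctness, only economy.
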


\begin{proof}
  The case $m=1$ is obvious, since $a_{ii}=1$. For $m > 1$, we propose computing the following formulae:
 \begin{align}
   \trace( P^2) &= \nlsum_{i,j=1}^{2n} p_{ij}p_{ij}\\
   \trace( P^3) &= \nlsum_{i,j=1}^{2n} p_{ij}^2p_{ij}\\
   \trace( P^4) &= \nlsum_{i,j=1}^{2n} p_{ij}^2p_{ij}^2.
\end{align}
We split each of traces above into 4 parts, corresponding to the 4 blocks of $P$. Pictorially,
\begin{center}
  $P\quad${\Large$\rightsquigarrow$}$\quad$  \begin{tabular}{|c|c|}
    \hline
    $S_1$ & $S_2$\\
    \hline
    $S_3$ & $S_4$\\
    \hline
  \end{tabular}\ \ ,
\end{center}
where $S_i$ (\fromto{1}{i}{4}) denotes the contribution to the trace from the corresponding block of $P^m$.

\noindent{\it (1) The case $m=2$.}
  \begin{align*}
    \trace(P^2) &= \nlsum_{i,j=1}^{2n} p_{ij}p_{ij}\\
    &= \sum_{i,j=1}^np_{ij}^2 + \sum_{i=1}^n\sum_{j=n+1}^{2n}p_{ij}^2 + \sum_{i=n+1}^{2n}\sum_{j=1}^np_{ij}^2 + \sum_{i=n+1}^{2n}\sum_{j=n+1}^{2n}p_{ij}^2\\
    &= S_1 + S_2 + S_3 + S_4.
  \end{align*}
  We compute each of the sums $S_i$ now. The algebraic simplifications exploit two main points: (i) symmetry of $L$; and (ii) the fact that $\sin(x)$ is an odd function. 
  \begin{align*}
    S_1 &= \sum_{i,j=1}^np_{ij}^2 = \sum_{i,j=1}^na_{ij}^2= \sum_{i,j=1}^n\ah_{ij}^2l_{ij}^2\\
    S_2  &= \sum_{i=1}^n\sum_{j=n+1}^{2n}p_{ij}^2 = \sum_{i=1}^n\sum_{j=1}^nb_{ij}^2  = \sum_{i=1}^n\sum_{j=1}^n \bh_{ij}^2l_{ij}^2\\
    S_3 &= \sum_{i=n+1}^{2n}\sum_{j=1}^np_{ij}^2 = \sum_{i=1}^n\sum_{j=1}^nb_{ji}^2=\sum_{i,j=1}^n\bh_{ji}^2l_{ij}^2\\
    S_4 &= \sum_{i=n+1}^{2n}\sum_{j=n+1}^{2n} p_{ij}^2 = \sum_{i,j=1}^na_{ij}^2= S_1.
  \end{align*}
  Notice, although $b_{ij}=-b_{ji}$, their squares are the same. Thus, $S_2=S_3$, whereby we obtain
  \begin{equation*}
    \trace(P^2) = 2(S_1+S_3) = 2\nlsum_{ij}(\ah_{ij}^2+\bh_{ij}^2)l_{ij}^2 = 2\nlsum_{ij}(\cos^2(x_i-x_j)+\sin^2(x_i-x_j))l_{ij}^2 = 2\trace(L^2).
  \end{equation*}

\noindent{\it (2) The case $m=3$.}

\noindent For clarity, we introduce the shorthand $\theta_{ij}=x_i-x_j$. 
  \begin{equation*}
    \trace(P^3) = \nlsum_{i,j=1}^{2n} [P^2]_{ij}p_{ij} = S_1+S_2+S_3 + S_4.
  \end{equation*}
  Direct multiplication shows that $P^2$ is given by
  \begin{equation}
    \label{eq.4}
    P^2\quad=\quad
    \begin{bmatrix}
      A & B\\
      B^T & A
    \end{bmatrix} \begin{bmatrix}
      A & B\\
      B^T & A
    \end{bmatrix}\quad=\quad
    \begin{bmatrix}
      A^2 + BB^T & AB+BA\\
      B^TA + AB^T & B^TB+A^2
    \end{bmatrix}.
  \end{equation}
  Using~\eqref{eq.4}, we now compute each of the partial sums $S_1$ through $S_4$.
  \begin{align*}
    S_1 &= \sum_{i,j=1}^n( [A^2]_{ij} + [BB^T]_{ij})\ah_{ij}l_{ij}= \nlsum_{ijk}\ah_{ij}l_{ij}(a_{ik}a_{kj} + b_{ik}b_{jk})\\
    &= \nlsum_{ijk}\ah_{ij}l_{ij}l_{ik}l_{kj}(\ah_{ik}\ah_{kj}+ \bh_{ik}\bh_{jk}) = \nlsum_{ijk}\ah_{ij}l_{ij}l_{ik}l_{kj}(\cos\theta_{ik}\cos\theta_{kj} - \sin\theta_{ik}\sin\theta_{kj})\\
    &= \nlsum_{ijk}\ah_{ij}l_{ij}l_{ik}l_{kj}(\cos(\theta_{ik}+\theta_{kj})) = \nlsum_{ijk}\ah_{ij}^2l_{ij}l_{ik}l_{kj} = \nlsum_{ij}\ah_{ij}^2l_{ij}[L^2]_{ij}.
  \end{align*}
  \begin{align*}
    S_2 &= \nlsum_{ij}[AB+BA]_{ij}b_{ij} = \nlsum_{ijk}\bh_{ij}l_{ij}(\ah_{ik}l_{ik}l_{kj}\bh_{kj} + \bh_{ik}l_{ik}l_{kj}\ah_{kj})\\
    &= \nlsum_{ijk}\bh_{ij}l_{ij}l_{ik}l_{kj}(\ah_{ik}\bh_{kj} + \bh_{ik}\ah_{kj}) = 
    \nlsum_{ijk}\bh_{ij}l_{ij}l_{ik}l_{kj}(\cos\theta_{ik}\sin\theta_{kj} + \sin\theta_{ik}\cos\theta_{kj})\\
    &=\nlsum_{ijk}\bh_{ij}l_{ij}l_{ik}l_{kj}\sin(\theta_{ik}+\theta_{kj}) = \nlsum_{ijk}\bh_{ij}^2l_{ij}l_{ik}l_{kj} =
    \nlsum_{ij}\bh_{ij}^2l_{ij}[L^2]_{ij}.
  \end{align*}
  Observe that the derivation below depends on $\bh_{ij}=-\bh_{ji}$, $\bh_{ki}=-\bh_{ik}$, and $\bh_{kj}=-\bh_{jk}$.
  \begin{align*}
    S_3 &= \nlsum_{ij}[B^TA+AB^T]_{ij}b_{ji} = \nlsum_{ij}\bh_{ji}l_{ij}\nlsum_k (b_{ki}a_{kj} + a_{ik}b_{jk})\\
    &= \nlsum_{ij}\bh_{ji}l_{ij}l_{ik}l_{kj}(\bh_{ki}\ah_{kj} + \ah_{ik}\bh_{jk}) = \nlsum_{ij}\bh_{ij}l_{ij}l_{ik}l_{kj}(\bh_{ik}\ah_{kj}-\ah_{ik}\bh_{kj})\\
    &=\nlsum_{ijk}\bh_{ji}l_{ij}l_{ik}l_{kj}(\sin\theta_{ik}\cos\theta_{kj}+\cos\theta_{ik}\sin\theta_{kj}) = \nlsum_{ijk}\bh_{ij}l_{ij}l_{ik}l_{kj} \sin(\theta_{ik}+\theta_{kj})\\
    &= \nlsum_{ijk}\bh_{ij}^2l_{ij}l_{ij}l_{ik}l_{kj} = \nlsum_{ij}\bh_{ij}^2l_{ij}[L^2]_{ij} = S_2.
  \end{align*}
  Similar manipulations show that $S_4=S_1$. Thus, we have
  \begin{equation}
    \label{eq.1}
    \trace(P^3) = 2(S_1+S_3) = \nlsum_{ij}(\ah_{ij}^2+\bh_{ij}^2)l_{ij}[L^2]_{ij} = 2\trace(L^3).
  \end{equation}

\noindent\underline{\it The case $m=4$.}

\noindent  Given the derivations above, one may safely guess that
  \begin{equation*}
    \trace(P^4) = 2\trace(L^4).
  \end{equation*}
  Let us explicitly see why this guess is true. As before, we write
  \begin{equation*}
    \trace(P^4) = \nlsum_{ij} [P^2]_{ij}[P^2]_{ij} = S_1 + S_2 + S_3 + S_4.
  \end{equation*}
  \begin{align*}
    S_1 &= \nlsum_{ij}([A^2]_{ij}+[BB^T]_{ij})^2 = \nlsum_{ij} \left( \nlsum_k a_{ik}a_{kj}+b_{ik}b_{jk}\right)^2\\
    &= \nlsum_{ij}\left(\nlsum_k l_{ik}l_{kj}(\ah_{ik}\ah_{kj}-\bh_{ik}\bh_{kj}) \right)^2\\
    &= \nlsum_{ij}\left(\nlsum_k l_{ik}l_{kj}(\cos\theta_{ik}\cos\theta_{kj}-\sin\theta_{ik}\sin\theta_{kj})\right)^2 = 
    \nlsum_{ij}\left(\nlsum_k l_{ik}l_{kj}\cos(\theta_{ik}+\theta_{kj}) \right)^2\\
    &= \nlsum_{ij}\left(\nlsum_k l_{ik}l_{kj}\ah_{ij}\right)^2 = \nlsum_{ij}\ah_{ij}^2[L^2]_{ij}^2.
  \end{align*}
  Similarly, by now we routinely see that
  \begin{align*}
    S_2 &= \nlsum_{ij}\left([AB]_{ij}+[BA]_{ij}\right)^2 = \nlsum_{ij}\bh_{ij}^2[L^2]_{ij}^2,
  \end{align*}
  and also that $S_3=S_2$ and $S_4=S_1$. Therefore, we finally have
  \begin{equation*}
    \trace(P^4) = 2(S_1+S_2) = 2\nlsum_{ij}(\ah_{ij}^2+\bh_{ij}^2)[L^2]_{ij}^2 = 2\nlsum_{ij}[L^2]_{ij}^2 = 2\trace(L^2L^2)=2\trace(L^4).\qedhere
  \end{equation*}
\end{proof}

Using Lemma~\ref{lem.powers} we obtain $\phi_m$ as shown by Proposition~\ref{prop.roots} below.
\begin{prop}
  \label{prop.roots}
  If $\rank(L)=2$, then the nonzero eigenvalues of the generalized trigonometric matrix $P$ given by~(\ref{eq.26}) are
\begin{equation}
  \label{eq.35}
  \lambda_{1,2} = \gamma + \delta,\quad \lambda_{3,4}=\gamma-\delta,
\end{equation}
and the rest of the eigenvalues are zero.
\end{prop}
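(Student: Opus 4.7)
The plan is to combine the power-sum identity $\trace P^m=2\trace L^m$ from Lemma~\ref{lem.powers} with the rank bound $\rank(P)\le 4$ from Corollary~\ref{corr.rank} and~\eqref{eq.31}, and then invoke Newton's identities to pin down the four nonzero eigenvalues of $P$ directly from the two nonzero eigenvalues of $L$.

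First I would diagonalize $L$. Because $L=lh^T+hl^T$ has rank $2$, its range is $\mathrm{span}\{l,h\}$, and its action on that subspace is represented in the (non-orthogonal) basis $\{l,h\}$ by a $2\times 2$ matrix with trace $2\,l^Th$ and determinant $(l^Th)^2-\|l\|^2\|h\|^2$. Its two eigenvalues are therefore
\begin{equation*}
  \mu_{1,2} \;=\; l^Th \,\pm\, \|l\|\,\|h\|,
\end{equation*}
which I would identify with $\gamma\pm\delta$ by setting $\gamma:=l^Th$ and $\delta:=\|l\|\|h\|$. Note that $\mu_1\mu_2=(l^Th)^2-\|l\|^2\|h\|^2\ne 0$ since $l,h$ are linearly independent (otherwise $\rank L<2$); so both $\mu_i$ are genuinely nonzero.

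Next, since $\rank(P)\le 4$, the characteristic polynomial factors as $\lambda^{2n-4}\chi_P(\lambda)$ with the reduced quartic $\chi_P$ of~\eqref{eq.7}. Newton's identities~\eqref{eq.8} express $\phi_1,\phi_2,\phi_3,\phi_4$ purely in terms of $\trace P^m$ for $m=1,\dots,4$. Plugging in $\trace P^m=2\trace L^m=2(\mu_1^m+\mu_2^m)$ (using Lemma~\ref{lem.powers} and that $L$ has only two nonzero eigenvalues), the power sums of the multiset of nonzero eigenvalues of $P$ match those of $\{\mu_1,\mu_1,\mu_2,\mu_2\}$ through order $4$. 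Since both multisets have size at most $4$ and their first four power sums agree, Newton's identities force the two multisets (hence the two quartics) to coincide, giving
\begin{equation*}
  \chi_P(\lambda)=(\lambda-\mu_1)^2(\lambda-\mu_2)^2.
\end{equation*}
This yields the four nonzero eigenvalues $\gamma\pm\delta$, each with multiplicity $2$, and the remaining $2n-4$ eigenvalues are zero. Incidentally, since $\mu_1\mu_2\ne 0$ we also get $\rank(P)=4=\rank(L)\rank(\Ph)$, confirming the equality case of~\eqref{eq.31}.

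The only subtle step is the Newton matching: one must be sure that four power-sum identities suffice to determine four eigenvalues up to multiplicity. This is exactly what the rank bound buys us — without it, equality of only the first four traces would be too weak, but having capped the number of nonzero eigenvalues at $4$ in advance means the reduced quartic is uniquely recoverable from $\trace P,\ldots,\trace P^4$ via~\eqref{eq.8}. Everything else is a routine eigenvalue computation for a rank-$2$ symmetric matrix.
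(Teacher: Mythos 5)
Your proposal is correct and takes essentially the same route as the paper: the rank bound $\rank(P)\le 4$ from Corollary~\ref{corr.rank} and~\eqref{eq.31}, the trace identities of Lemma~\ref{lem.powers}, Newton's identities~\eqref{eq.8}, and the identification of the two nonzero eigenvalues of $L$ as $\gamma\pm\delta$. The only (harmless) difference is presentational: you finish by observing that four matching power sums determine a multiset of at most four numbers, whereas the paper explicitly computes $\phi_1,\dots,\phi_4$ in terms of $\gamma$ and $\delta$ and factors the reduced quartic by hand.
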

\begin{proof}
  First, we introduce the quantities
  \begin{equation}
    \label{eq.16}
    \gamma_m := 2\trace(L^m),\quad m=1,2,3,4,
  \end{equation}
  and then plug them into~(\ref{eq.8}) to obtain
  \begin{equation}
    \label{eq.15}
    \phi_1 = \gamma_1;\quad\phi_2 = \half(\gamma_1^2- \gamma_2);\quad \phi_3 = \tfrac{1}{6}\gamma_1^3-\tfrac{1}{2}\gamma_2\gamma_1 + \tfrac{1}{3}\gamma_3;\quad 4\phi_4 = \phi_3\gamma_1 - \phi_2\gamma_2 + \gamma_1\gamma_3 - \gamma_4.
  \end{equation}
  We must now compute roots of the quartic polynomial $\chi_P(\lambda)$ given by (\ref{eq.7}). To obtain its roots explicitly, let us further refine the values that $\phi_m$ can take, by computing $\gamma_m=\sum_i \lambda_i^m(L)$ explicitly. 

  Thus, we first compute the eigenvalues of $L$. Recall that $L = hl^T + lh^T$ is at most a rank-2 matrix. Thus, its nonzero eigenvalues must lie among the roots of the reduced polynomial
  \begin{equation}
    \label{eq.14}
    \chi_L(\lambda) := \lambda^2 - \phi_1(L)\lambda + \phi_2(L) = 0,
  \end{equation}
  where $\phi_i(L)$ ($i=1$, $2$) are elementary symmetric polynomials of matrix $L$. Now define the quantities
  \begin{equation}
    \label{eq.23}
    \gamma = l^Th,\quad\quad \delta = [(l^Tl)(h^Th)]^{1/2}.
  \end{equation}
  Then, $\phi_1(L)=\trace L = 2\gamma$, while $\phi_2(L)=\half((\trace L)^2-\trace L^2) = \gamma^2-\delta^2$, since
  \begin{equation*}
    \trace L^2 = \trace (lh^T+hl^T)(lh^T+hl^T) = \trace\bigl(lh^Tlh+lh^Thl^T+hl^Tlh^T+hl^Thl^T\bigr)=2(\gamma^2+\delta^2).
  \end{equation*}
  Plugging in these values into~\eqref{eq.14} we immediately see that the quadratic factorizes as
  \begin{equation}
    \label{eq.21}
    \chi_L(\lambda) = \lambda^2 - 2\gamma\lambda + \gamma^2-\delta^2 = (\gamma -\delta -\lambda ) (\gamma +\delta -\lambda).
  \end{equation}
  Thus, the (nonzero, unless $\gamma=\delta$) eigenvalues of $L$ are
  \begin{equation}
    \label{eq.22}
    \lambda_1 = \gamma + \delta,\qquad \lambda_2 = \gamma - \delta.
  \end{equation}
  Using~\eqref{eq.22} and performing some algebra we obtain the following equations:
  \begin{subequations}
    \begin{align}
      \label{eq.10}
      \gamma_1\quad&=\quad 2(\lambda_1+\lambda_2) = 4\gamma\\
      \label{eq.11}
      \gamma_2\quad&=\quad 2(\lambda_1^2+\lambda_2^2) = 4(\gamma^2 + \delta^2)\\
      \label{eq.12}
      \gamma_3\quad&=\quad 2(\lambda_1^3+\lambda_2^3) = 4\left(\gamma^3 + 3\gamma\delta^2\right)\\
      \label{eq.13}
      \gamma_4\quad&=\quad 2(\lambda_1^4+\lambda_2^4) = 4\left(\gamma^4+6 \gamma^2\delta^2 +\delta^4\right).
    \end{align}
  \end{subequations}
  Using~\eqref{eq.10}--\eqref{eq.13} and the definitions~\eqref{eq.15}, we obtain $\phi_m$ as follows
  \begin{subequations}
    \begin{alignat}{3}
      \label{eq.32}
      &\phi_1\quad=\quad 4\gamma,\qquad&\phi_2&\quad=\quad 6\gamma^2 - 2\delta^2\\
      \label{eq.34}
      &\phi_3\quad=\quad 4(\gamma^3 - \gamma\delta^2),\qquad&\phi_4&\quad=\quad (\gamma^2-\delta^2)^2.
    \end{alignat}
  \end{subequations}
  Recall now the reduced characteristic polynomial for $P$
  \begin{equation}
    \label{eq.28}
    \chi_P(\lambda) = \lambda^4 - \phi_1\lambda^3 + \phi_2\lambda^2 - \phi_3\lambda + \phi_4,
  \end{equation}
  into which we substitute~\eqref{eq.32}--\eqref{eq.34} and simplify to obtain the factorization
  \begin{equation}
    \label{eq.17}
    \chi_P(\lambda) = \bigl(\lambda-\gamma +\delta\bigr)^2 \bigl(\lambda-\gamma -\delta\bigr)^2.
  \end{equation}
  This immediately yields the desired roots
  \begin{equation*}
    \lambda_{1,2} = \gamma + \delta,\quad \lambda_{3,4}=\gamma-\delta.\qedhere
  \end{equation*}
\emph{Remark:} Observe that $\gamma=\delta$ can hold if only if $\rank(L) = 1$ (we ignore the trivial case of rank-0 $L$). In this case $P$ has only two nonzero eigenvalues.
\end{proof}
As a corollary we immediately obtain a proof of Conjecture~\ref{conj.one}, which was posed by~\citet{zhLiLi12}.
\begin{corr}
  \label{corr.conj}
  Let the matrix $P(\omega)$ be as defined by~\eqref{eq.71} and~\eqref{eq.18}. Then, it has 2 positive and 2 negative eigenvalues given by
  \begin{equation}
    \label{eq.19}
    \lambda_+(P) = \frac{n}{4}\left(n-1 + \sqrt{\frac{4n^2-6n+2}{3}}\right),\quad\quad \lambda_-(P) = \frac{n}{4}\left(n-1 - \sqrt{\frac{4n^2-6n+2}{3}}\right).
  \end{equation}
\end{corr}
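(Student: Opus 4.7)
The proof will be a direct specialization of Proposition~\ref{prop.roots} to the concrete $L$ identified in Section~2, followed by routine arithmetic simplifications of $\gamma$ and $\delta$ into the form given in~\eqref{eq.19}. The first step is to observe that for the matrix~\eqref{eq.71} with entries~\eqref{eq.18}, the underlying rank-$2$ matrix is $L = le^T + el^T$, where $e$ is the all-ones vector and $l \in \reals^n$ has entries $l_i = (i-1)/2$. Thus the hypotheses of Proposition~\ref{prop.roots} are satisfied (with the identification $h=e$), and by that proposition the nonzero eigenvalues of $P(\omega)$ are $\gamma+\delta$ and $\gamma-\delta$, each with multiplicity $2$.

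Next I would compute $\gamma$ and $\delta$ in closed form using the standard power-sum formulas. The inner product $\gamma = l^T e$ reduces to $\tfrac{1}{2}\sum_{i=1}^{n}(i-1) = \tfrac{n(n-1)}{4}$. For $\delta = [(l^Tl)(e^Te)]^{1/2}$, I compute $l^Tl = \tfrac{1}{4}\sum_{i=0}^{n-1} i^2 = \tfrac{n(n-1)(2n-1)}{24}$ and $e^Te = n$, so that
\begin{equation*}
  \delta \;=\; \sqrt{\frac{n^2(n-1)(2n-1)}{24}}
  \;=\; \frac{n}{4}\sqrt{\frac{(n-1)(2n-1)\cdot 2}{3}}
  \;=\; \frac{n}{4}\sqrt{\frac{4n^2-6n+2}{3}},
\end{equation*}
using $(n-1)(2n-1) = 2n^2-3n+1$. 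Factoring out $n/4$ from $\gamma$ yields $\gamma = \tfrac{n}{4}(n-1)$, so that $\lambda_{\pm} = \gamma\pm\delta$ agree exactly with the expressions in~\eqref{eq.19}.

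Finally, I would note that the claim about the signs (two positive and two negative eigenvalues) follows from the fact that $\delta > \gamma$ whenever $n\ge 2$: indeed $\delta^2-\gamma^2 = \tfrac{n^2(2n^2-3n+1)}{24} - \tfrac{n^2(n-1)^2}{16}$, which a one-line computation shows is strictly positive for $n\ge 2$. (Equivalently, this is the Cauchy--Schwarz gap $(l^Tl)(e^Te) - (l^Te)^2$, which is strictly positive because $l$ is not a scalar multiple of $e$.) Hence $\gamma-\delta<0<\gamma+\delta$, proving both the sign statement and the multiplicities.

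There is really no substantive obstacle here: the conceptual work is done by Proposition~\ref{prop.roots}, and what remains is bookkeeping. The only mildly delicate step is recognizing that $\tfrac{(n-1)(2n-1)\cdot 2}{3} = \tfrac{4n^2-6n+2}{3}$ so that $\delta$ matches the slightly opaque closed form conjectured by~\citet{zhLiLi12}; this is just an algebraic identity.
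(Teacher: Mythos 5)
Your proposal is correct and follows essentially the same route as the paper: identify $L = le^T + el^T$ with $l_i=(i-1)/2$ and $h=e$, invoke Proposition~\ref{prop.roots}, and reduce $\gamma=l^Te$ and $\delta=[(l^Tl)(e^Te)]^{1/2}$ to the stated closed forms. The only difference is that you explicitly justify the sign claim (two positive, two negative) via the strict Cauchy--Schwarz gap, a small point the paper leaves implicit.
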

\begin{proof}
  From~\eqref{eq.18} we see that $L = le^T+el^T$, so that $\delta=\sqrt{nl^Tl}=\sqrt{n}\norm{l}$. Moreover, $l_i = (i-1)/2$ for $1 \le i \le n$. Thus, the eigenvalues of $P$ are given by the roots of~\eqref{eq.17}, which equal
  \begin{equation}
    \label{eq.20}
    \lambda_+ = \gamma + \sqrt{n}\norm{l},\qquad\lambda_- = \gamma - \sqrt{n}\norm{l},
  \end{equation}
  both with multiplicity 2. Since $l_i = (i-1)/2$, we have
  \begin{equation*}
      \gamma = l^Te = \tfrac{1}{4}n(n-1),\qquad \norm{l} = \sqrt{\tfrac{1}{24}(2n^3-3n^2+n)} = \frac{\sqrt{n}}{4}\sqrt{\frac{4n^2 - 6n + 2}{3}}.
\end{equation*}
Plugging these values of $\gamma$ and $\norm{l}$ into~\eqref{eq.20} and simplifying, we obtain~\eqref{eq.19}.
\end{proof}

We conclude by mentioning that instead of trigonometric matrices, we can carry out a similar derivation for more general rank-2 matrices that satisfy the hypotheses of Lemma~\ref{lem.rank}. But the details are laborious, so we leave them as an exercise for the interested reader.

\bibliographystyle{plainnat}

\end{document}